\tikzstyle{vertex}=[circle,draw=black,fill=black,inner sep=0,minimum size=3pt,text=white,font=\footnotesize]
\date{}
\title{\vspace{-0.8cm}Improved Ramsey-type results for comparability graphs}
\author{
D\'aniel Kor\'andi \thanks{Institute of Mathematics, EPFL
, Lausanne, Switzerland. \, Emails: \{daniel.korandi,istvan.tomon\}@epfl.ch. Research supported in part by SNSF grants 200020-162884 and 200021-175977. }
\and
Istv\'an Tomon \footnotemark[1]
}
\theoremstyle{plain}
\newtheorem{theorem}{Theorem}
\newtheorem{claim}[theorem]{Claim}
\newtheorem{lemma}[theorem]{Lemma}
\newtheorem{conjecture}[theorem]{Conjecture}
\newtheorem{problem}[theorem]{Problem}
\newtheorem{prop}[theorem]{Proposition}
\Crefname{theorem}{Theorem}{Theorems}
\Crefname{definition}{Definition}{Definitions}
\Crefname{corollary}{Corollary}{Corollaries}
\Crefname{claim}{Claim}{Claims}
\Crefname{lemma}{Lemma}{Lemmas}
\Crefname{conjecture}{Conjecture}{Conjectures}
\Crefname{problem}{Problem}{Problems}
\Crefname{prop}{Proposition}{Propositions}
\theoremstyle{definition}
\DeclareMathOperator{\rank}{rk}
\DeclareMathOperator{\polylog}{polylog}
\newcommand{\eps}{\varepsilon}
\newcommand{\subs}{\subseteq}
\renewcommand{\Pr}{\mathbb{P}}
\newcommand{\calS}{\mathcal{S}}
\begin{document}

\maketitle
\sloppy

\begin{abstract}
Several discrete geometry problems are equivalent to estimating the size of the largest homogeneous sets in graphs that happen to be the union of few \emph{comparability graphs}. An important observation for such results is that if $G$ is an $n$-vertex graph that is the union of $r$ comparability (or more generally, perfect) graphs, then either $G$ or its complement contains a clique of size $n^{1/(r+1)}$.

This bound is known to be tight for $r=1$. The question whether it is optimal for $r\ge 2$ was studied by Dumitrescu and T\'oth. We prove that it is essentially best possible for $r=2$, as well: we introduce a probabilistic construction of two comparability graphs on $n$ vertices, whose union contains no clique or independent set of size $n^{1/3+o(1)}$.

Using similar ideas, we can also construct a graph $G$ that is the union of $r$ comparability graphs, and neither $G$, nor its complement contains a complete bipartite graph with parts of size $\frac{cn}{(\log n)^r}$. With this, we improve a result of Fox and Pach.
\end{abstract}

\section{Introduction}

An old problem of Larman, Matou\v{s}ek, Pach and T\"or\H{o}csik \cite{LMPT94,PT94} asks for the largest $m=m(n)$ such that among any $n$ convex sets in the plane, there are $m$ that are pairwise disjoint or $m$ that pairwise intersect. Considering the \emph{disjointness graph} $G$, whose vertices correspond to the convex sets and edges correspond to disjoint pairs, this problem asks the Ramsey-type question of estimating the largest clique or independent set in $G$. The best known lower bound, proved in \cite{LMPT94}, is based on the fact that every disjointness graph is the union of four comparability graphs.

$G$ is a \emph{comparability graph} if its edges correspond to comparable pairs in some partially ordered set on its vertex set $V(G)$. It is well-known that every comparability graph $G$ is perfect, i.e., for every induced subgraph $H$ of $G$, the chromatic number $\chi(H)$ is equal to the size of the largest clique $\omega(H)$. We will refer to cliques and independent sets as \emph{homogeneous sets}. 

As noted by Dumitrescu and T\'oth \cite{DT02}, unions of perfect graphs have strong Ramsey properties.

\begin{prop}
\label{basicthm}
If a graph $G$ is the union of $r$ perfect graphs, then $\omega(G)^{r}\ge \chi(G)$. In particular, such a $G$ contains a homogeneous set of size $n^{1/(r+1)}$.
\end{prop}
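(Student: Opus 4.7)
The plan is to prove the inequality $\omega(G)^r \ge \chi(G)$ by a product coloring argument, and then derive the homogeneous-set bound from the standard fact that $\chi(G)\cdot \alpha(G) \ge |V(G)|$.

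First, I would write $G = G_1 \cup \cdots \cup G_r$, where each $G_i$ is perfect on the vertex set $V(G)$. For each $i$, let $c_i : V(G) \to \{1, \ldots, \chi(G_i)\}$ be a proper coloring of $G_i$. Then the map $c : V(G) \to \prod_i \{1,\ldots,\chi(G_i)\}$ defined by $c(v) = (c_1(v), \ldots, c_r(v))$ is a proper coloring of $G$: any edge $uv \in E(G)$ belongs to some $G_i$, so $c_i(u)\ne c_i(v)$ and hence $c(u)\ne c(v)$. This gives $\chi(G) \le \prod_{i=1}^r \chi(G_i)$. Since each $G_i$ is perfect, $\chi(G_i) = \omega(G_i)$, and since $G_i$ is a subgraph of $G$, $\omega(G_i) \le \omega(G)$. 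Combining these yields $\chi(G) \le \omega(G)^r$.

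For the second part, set $n = |V(G)|$ and $k = \omega(G)$. If $k \ge n^{1/(r+1)}$, then $G$ contains a clique of the required size. Otherwise $k < n^{1/(r+1)}$, and the inequality $\alpha(G) \ge n/\chi(G) \ge n/\omega(G)^r = n/k^r$ gives $\alpha(G) > n/n^{r/(r+1)} = n^{1/(r+1)}$, so $G$ contains an independent set of size at least $n^{1/(r+1)}$. Either way, a homogeneous set of size $n^{1/(r+1)}$ exists.

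There is no real obstacle here: the only ingredient beyond elementary graph theory is the product coloring trick, which is clean once noticed, and the perfection of each $G_i$ is used exactly to convert $\chi(G_i)$ into $\omega(G_i)$. The statement is effectively folklore and serves as the baseline that the rest of the paper aims to show is essentially tight.
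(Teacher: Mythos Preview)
Your proof is correct and matches the paper's own argument essentially line for line: product coloring to get $\chi(G)\le \prod_i \omega(G_i)\le \omega(G)^r$, followed by the bound $\alpha(G)\chi(G)\ge n$ to extract a homogeneous set of size $n^{1/(r+1)}$. There is nothing to add.
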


\begin{proof}
Let $G_{1},\dots,G_{r}$ be perfect graphs whose union is $G$. Let $\chi_{i}$ be a proper coloring of $G_{i}$ with $\omega(G_{i})$ colors. Then the coloring $\chi$ defined by $\chi(v)=(\chi_{1}(v),\dots,\chi_{r}(v))$ for $v\in V(G)$ is a proper coloring of $G$ with at most $\omega(G_{1})\cdots\omega(G_{r}) \le \omega(G)^{r}$ colors.

As $\alpha(G)\chi(G)\ge n$, we either have $\alpha(G)\ge n^{1/(r+1)}$ or $\chi(G)\ge n^{r/(r+1)}$. The latter implies $\omega(G)\ge n^{1/(r+1)}$.
\end{proof}

In \cite{LMPT94}, Larman, Matou\v{s}ek, Pach and T\"or\H{o}csik show that the disjointness graph of convex sets is the union of four comparability graphs, and use \Cref{basicthm} to deduce the existence of $n^{1/5}$ pairwise disjoint or pairwise intersecting sets among them. In the special case when each convex set is a half line, they note that the disjointness graph can actually be written as the union of two comparability graphs, so we even get $n^{1/3}$ such sets. These bounds have not been improved in the past 25 years, even though it is not even clear if \Cref{basicthm} is tight in general.

\medskip
The problem of estimating the largest size of homogeneous sets in the union of comparability graphs was raised by Dumitrescu and T\'oth \cite{DT02}. They defined $f_r(n)$ as the largest integer such that the union of any $r$ comparability graphs on $n$ vertices contains a homogeneous set of size $f_r(n)$. It is easy to see that $f_1(n)=\lceil\sqrt{n}\rceil$. More generally, Dumitrescu and T\'oth proved
\[ n^{1/(r+1)} \le f_r(n) \le n^{(1+\log_{2}r)/r} \]
by complementing \Cref{basicthm} with an appropriate blow-up construction of comparability graphs. 
For $r=2$, they found a somewhat better construction to establish $f_2(n)\le n^{0.4118}$. This upper bound was subsequently improved to $f_2(n)\le n^{0.3878}$ by Szab\'{o} \cite{S,DT02}. Our first result shows that \Cref{basicthm} is essentially sharp for $r=2$, i.e., $f_2(n)=n^{1/3+o(1)}$.

\begin{theorem}\label{mainthm}
Let $n$ be a positive integer. There is a graph $G$ on $n$ vertices that is the disjoint union of two comparability graphs, such that the largest homogeneous set of $G$ has size $n^{1/3}(\frac{\log n}{\log \log n})^{2/3}$.
\end{theorem}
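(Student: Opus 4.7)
My plan is a probabilistic construction. Assign each vertex $v\in[n]$ two independent uniformly random triples $(x_v,y_v,z_v),\ (x'_v,y'_v,z'_v)\in[0,1]^3$, and define two $3$-dimensional dominance posets: $u<_{P_1} v$ iff $x_u<x_v$, $y_u<y_v$, $z_u<z_v$, and similarly $u<_{P_2} v$ with the primed coordinates. Let $G_1,G_2$ be the comparability graphs of $P_1,P_2$. Edge-disjointness will be arranged by a short technical step (the expected overlap is small, corresponding to pairs comparable in both $3$-dim orders, and can be absorbed into $G_1$ while $G_2$ is replaced by the comparability graph of a suitable sub-poset $P_2'$), producing $G:=G_1\sqcup G_2'$ as the required disjoint union of two comparability graphs.

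For the independent set bound, $\alpha(G)$ equals the largest common antichain of $P_1,P_2$. A first-moment calculation gives $\mathbb{E}[\#\text{common antichains of size }m]\le\binom{n}{m}\,f(m)^2$, where $f(m)$ is the probability that a fixed $m$-element subset is an antichain in a single random $3$-dim order. Conditioning on the first coordinate's permutation (WLOG identity), $f(m)$ equals the probability that two independent uniformly random permutations of $[m]$ share no common non-inversion, a quantity bounded by $O(1)/m!$ via counting linear extensions of the ``non-inversion poset''. Plugging in and using $\binom{n}{m}\le n^m/m!$ together with Stirling gives an expected count at most $(O(1)\,n^{1/3}/m)^{3m}$, which is $o(1)$ already for $m=e\,n^{1/3}(1+\epsilon)$, and certainly for $m\ge N := n^{1/3}(\log n/\log\log n)^{2/3}$. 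Hence $\alpha(G)\le N$ with high probability.

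For the clique bound, a clique $S$ of $G$ satisfies $K_{|S|}=G_1|_S\cup G_2|_S$. Proposition~\ref{basicthm} applied to $S$ yields $\operatorname{chain}_{P_1}(S)\cdot\operatorname{chain}_{P_2}(S)\ge|S|$. A random $3$-dim order on $m$ points has chain length concentrated near $c_3 m^{1/3}$ (Bollob\'as--Winkler), and the probability of chain length $\ge\ell$ decays super-polynomially as $\ell$ grows beyond $m^{1/3}$ (via the first-moment bound $\binom{m}{\ell}/(\ell!)^2$). Thus any clique $S$ forces an atypical long chain in at least one of $P_1,P_2$, and a union bound over subsets and split-points $\ell$ shows $\omega(G)=\polylog(n)\le N$ with high probability. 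A final union bound combining the two estimates finishes the proof.

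\emph{Main obstacle.} The central technical step is the estimate $f(m)\le O(1/m!)$ for the antichain probability. The lower bound $f(m)\ge 1/m!$ (via the flip permutation, which trivially produces a valid configuration) shows the target rate is tight, but the matching upper bound requires controlling the number of permutation pairs $(\sigma,\tau)$ sharing no non-inversion—equivalently, pairs in which $\tau$ is a linear extension of the inversion poset of $\sigma$. The distribution of the number of such linear extensions for a random inversion poset of $\sigma$ must be analyzed carefully (e.g.\ via martingale or entropy arguments) to extract the $1/m!$ decay rate with at most a sub-exponential correction, which is what allows the subsequent first-moment union bound to close at $m \sim e n^{1/3}$.
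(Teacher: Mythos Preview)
Your central estimate $f(m)\le O(1)/m!$ is false, and this kills the independent-set argument. Recall that $f(m)$ is the probability that $m$ i.i.d.\ uniform points in $[0,1]^3$ form an antichain. The maximum antichain in a random $3$-dimensional order on $N$ points has size $\Theta(N^{2/3})$; applying this with $N=m^{3/2}$ and using the first moment in reverse gives
\[
f(m)\ \ge\ \binom{m^{3/2}}{m}^{-1}\ \approx\ (e\sqrt{m})^{-m}\ =\ m^{-m/2-o(m)},
\]
which is exponentially larger than $1/m!\approx m^{-m}$. No sub-exponential correction can close this gap. In fact your construction satisfies $\alpha(G)\ge\Omega(n^{4/9})$ deterministically in the randomness of $P_2$: take a maximum antichain $A$ of $P_1$ of size $\Theta(n^{2/3})$; since $P_2$ is independent of $P_1$, the restriction $P_2|_A$ is a random $3$-dimensional order on $|A|$ points and hence contains an antichain of size $\Theta(|A|^{2/3})=\Theta(n^{4/9})$, which is independent in $G=G_1\cup G_2$. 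So the construction cannot possibly reach $n^{1/3+o(1)}$.

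The clique argument is also off. Any chain of $P_1$ is already a clique in $G$, and such chains have length $\Theta(n^{1/3})$, so $\omega(G)$ is not $\polylog(n)$; a chain of length $n^{1/3}$ is typical, not ``atypical'', so the union bound you sketch never gets off the ground. (This particular error is not fatal to the target bound, since $n^{1/3}$ is allowed, but the reasoning is wrong.) The paper's proof is structurally quite different: it places the vertices on a $b\times b$ grid of blocks of size $a$, with $a=n^{1/3}(\log n/\log\log n)^{2/3}$ and $b=n^{1/3}(\log\log n/\log n)^{1/3}$, makes each row a disjoint union of $a$ random chains in $<_1$ and each column a disjoint union of $a$ chains in $<_2$, and then bounds cliques via a balls-into-bins analysis of how the row-chains meet the column-chains. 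The grid structure forces $\alpha(G)=a$ exactly; there is no comparable rigidity in a product of two unstructured random $3$-dimensional orders.
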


The closely related problem of finding large complete or empty bipartite graphs in the union of comparability graphs was investigated by Fox and Pach \cite{FP09} and the second author \cite{T16}. We define a \emph{biclique} as a complete bipartite graph with parts of equal size. Let $p_r(n)$ denote the largest number such that for any graph $G$ on $n$ vertices that is the union of $r$ comparability graphs, either $G$ or its complement contains a biclique of size $p_r(n)$. Fox and Pach proved that
\[
n\cdot e^{-c_r (\log\log n)^{r}} \, < \, p_{r}(n) \, = \, O_{r}\left(n\frac{(\log\log n)^{r-1}}{(\log n)^{r}}\right).
\]
In $\cite{T16}$, these results are extended in a Tur\'{a}n type setting.

As our next result, we show that the $(\log\log n)^{r-1}$ factor can be removed in the upper bound. While this might not seem like a substantial improvement, we find it reasonable to believe that this new upper bound is sharp.
 
\begin{theorem}\label{thm:bipartite}
	For every positive integer $r$, there is a constant $c=c(r)>0$ such that the following holds. For every positive integer $n$, there is a graph $G$ on $n$ vertices that is the union of $r$ comparability graphs, and neither $G$ nor its complement contains a biclique of size $\frac{cn}{(\log n)^r}$.
\end{theorem}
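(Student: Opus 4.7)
The plan is to build $G$ inductively from a probabilistic base case via a substitution-type product, and to control bicliques in both $G$ and $\overline G$ through the inductive bounds.

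For the base case $r=1$, I would take the comparability graph $H$ of a random two-dimensional order on $m$ vertices: fix a uniformly random permutation $\sigma$ of $[m]$ and declare $u<_H v$ iff $u<v$ and $\sigma(u)<\sigma(v)$. A biclique in $H$ or $\overline H$ corresponds to a pair of rectangles in the permutation matrix of $\sigma$ with a monotone or antimonotone separation, and a first-moment union bound over such rectangles shows that with positive probability both $H$ and $\overline H$ avoid bicliques of size $cm/\log m$, matching the Fox--Pach bound at the bottom of the induction.

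For $r\ge 2$, I would set $G=G_{r-1}[H]$, the lexicographic product, where $G_{r-1}$ is the inductive $(r-1)$-color construction on $n_1$ vertices and $H$ is the base construction on $n_2$ vertices, with $n_1 n_2=n$ and $n_2\approx\log n$ chosen to balance the two biclique contributions. Since the blow-up of a comparability graph is a comparability graph, and the $n_1$ within-block disjoint copies of $H$ together form one further comparability graph, $G$ is the union of $r$ comparability graphs. Crucially, the identity $\overline{G_{r-1}[H]}=\overline{G_{r-1}}[\overline H]$ means the inductive bounds on both $G_{r-1}$ and its complement transfer symmetrically to $G$ and $\overline G$ at the next level.

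For a biclique $(A,B)$ of size $K$ in $G$ with block projections $I,J$ and fibres $A_i,B_i$, all cross-block edges come from $G_{r-1}$, so $(I\setminus J,J)$ and $(I,J\setminus I)$ are bicliques in $G_{r-1}$, giving inductive bounds on $|I\setminus J|$ and $|J\setminus I|$; inside each shared block $i\in I\cap J$ the fibres $(A_i,B_i)$ form a biclique in $H$, so $\min(|A_i|,|B_i|)\le cn_2/\log n_2$. Partitioning $I\cap J$ into blocks where the $A$-fibre is thin versus where the $B$-fibre is thin and summing against the between-block and within-block estimates should balance to $K\le cn/(\log n)^r$, and the same argument applied to $\overline G=\overline{G_{r-1}}[\overline H]$ gives the matching bound for the complement.

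The hardest part is this accounting: a naive combination of the two bounds only gives $K\le s_{r-1}n_2=O(n/(\log n)^{r-1})$, one log factor short of the target, because shared blocks where one fibre is as large as $n_2$ are not directly constrained on the other side. Recovering the missing $\log n$ requires using the within-block bound to force each shared block into a clean "thin-on-$A$" or "thin-on-$B$" dichotomy and exploiting that the two types cannot both contribute heavily when $|A|\approx|B|$. If this deterministic balance does not close the gap, the back-up is to randomize the substitution, planting an independent base-case copy of $H$ in each block so that the probability of a large biclique factorizes across blocks and a first-moment union bound absorbs the remaining $(\log\log n)^{r-1}$ factor.
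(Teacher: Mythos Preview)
Your inductive lexicographic-product construction has a structural obstruction that no amount of bookkeeping or randomization can repair. If $(I,J)$ is a balanced biclique of size $s$ in $G_{r-1}$ with $I\cap J=\emptyset$, then $I\times V(H)$ is complete to $J\times V(H)$ in $G=G_{r-1}[H]$, giving a biclique of size $s\cdot n_2$ in $G$; the analogous statement holds for $\overline G=\overline{G_{r-1}}[\overline H]$. Thus the largest biclique in $G$ or $\overline G$ is always at least $p_{r-1}(n_1)\cdot n_2$, where $p_{r-1}$ is the Fox--Pach function. Already for $r=2$ this forces a biclique of size at least $p_1(n_1)\cdot n_2=\Theta\!\big(\tfrac{n_1}{\log n_1}\big)\cdot n_2=\Theta\!\big(\tfrac{n}{\log n_1}\big)\ge \Theta\!\big(\tfrac{n}{\log n}\big)$, a full $\log n$ factor above the target $n/(\log n)^2$, for \emph{every} choice of $n_1,n_2$. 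Your ``thin-on-$A$/thin-on-$B$'' dichotomy and your randomized-substitution back-up both operate only on the within-block copies of $H$; they do nothing to the cross-block biclique just exhibited, which uses entire blocks and never touches $H$ at all. The ``one log factor short'' you flagged is therefore not an artifact of naive accounting but a genuine feature of the construction---indeed this blow-up mechanism is essentially why the Fox--Pach construction loses the $(\log\log n)^{r-1}$ factor that the theorem is designed to remove.

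The paper avoids this by abandoning induction entirely. It builds all $r$ orders simultaneously on a single $[b]^r$-indexed grid of blocks with $b=\Theta(\log n)$, using an $(a,3,\delta)$-expander $H$ on $a=n/b^r$ vertices to define comparabilities: $v<_s w$ when the $s$-th coordinate realizes the $\ell_\infty$ rank difference \emph{and} $h(v),h(w)$ are joined in the graph power $H^{|\alpha-\beta|_\infty}$. The expansion of $H$ (Claim on $H^k$) forces any empty bipartite pair in $\overline G$ to be small, while the bounded degree of $H$ keeps $G$ so sparse that it cannot contain a large biclique either. The key conceptual difference is that the expander links all $r$ ``directions'' at once through the common map $h$, so no single order can be blown up independently of the others.
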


We organize the paper as follows. We give the proof of \Cref{mainthm} in \Cref{sect:clique} and the proof of \Cref{thm:bipartite} in \Cref{sect:bipartite}. We discuss some further connections to geometry and open problems in \Cref{sect:open}. We systematically omit floor and ceiling signs whenever they are not crucial.

\section{Small cliques and independent sets}\label{sect:clique}

\begin{figure}
\begin{center}
	\begin{tikzpicture}[scale=1]

	\draw[fill=black, fill opacity=.1] 
	(0.75,6) -- (1,6.25) -- (2,5.25) -- (1.75,5) -- (0.75,6);
	\draw[fill=black, fill opacity=.1] 
	(0.75,4) -- (1,4.25) -- (2,3.25) -- (1.75,3) -- (0.75,4);
	\draw[fill=black, fill opacity=.1] 
	(0.75,2) -- (1,2.25) -- (2,1.25) -- (1.75,1) -- (0.75,2);
	
	\draw[blue, dashed, fill=blue, fill opacity=.1] 
	(6.75-0.75,6) -- (6.75-1,6.25) -- (6.75-2,5.25) -- (6.75-1.75,5) -- (6.75-0.75,6);
	\draw[blue, dashed, fill=blue, fill opacity=.1] 
	(6.75-0.75,4) -- (6.75-1,4.25) -- (6.75-2,3.25) -- (6.75-1.75,3) -- (6.75-0.75,4);
	\draw[blue, dashed, fill=blue, fill opacity=.1] 
	(6.75-0.75,2) -- (6.75-1,2.25) -- (6.75-2,1.25) -- (6.75-1.75,1) -- (6.75-0.75,2);

	\foreach \i in {0,...,3}
	{
		\foreach \j in {0,...,3}
		{
        \draw[fill=white] (2*\i+0.375,2*\j+0.625) circle (0.75);
			\foreach \k in {0,...,3}
			{
			
		\node[vertex] (A\i\j\k) at (2*\i+0.25*\k,2*\j+1-0.25*\k) {};
        	}
		}
	}	
  \node[] at (-1,6.75) {$V_{1,1}$};
  \node[] at (-1,0.25) {$V_{b,1}$};
  \node[] at (7.75,0.25) {$V_{b,b}$};
  \node[] at (7.75,6.75) {$V_{1,b}$};

  \node at (1.375,3.625) {$<_1$};
  \node[blue] at (5.375,3.625) {$<_2$};

   \foreach \i[evaluate=\i as \evali using int(\i+1)] in {0,...,2}
   {
    \foreach \j in {0,...,3}
    {
     \foreach \k in {0,...,3}
     {
     	\draw[blue, dashed] (2*\j+0.75-0.25*\k,2*\i+0.25*\k+0.25) -- (2*\j+0.75-0.25*\k,2*\i+2+0.25*\k+0.25) ;
     }	
    }
   }
   
\draw (0.75,0.25) -- (2.00,1.00) ;
\draw (0.50,0.50) -- (2.50,0.50) ;
\draw (0.25,0.75) -- (2.75,0.25) ;
\draw (0.00,1.00) -- (2.25,0.75) ;
\draw (0.75,2.25) -- (2.75,2.25) ;
\draw (0.50,2.50) -- (2.50,2.50) ;
\draw (0.25,2.75) -- (2.25,2.75) ;
\draw (0.00,3.00) -- (2.00,3.00) ;
\draw (0.75,4.25) -- (2.50,4.50) ;
\draw (0.50,4.50) -- (2.00,5.00) ;
\draw (0.25,4.75) -- (2.25,4.75) ;
\draw (0.00,5.00) -- (2.75,4.25) ;
\draw (0.75,6.25) -- (2.00,7.00) ;
\draw (0.50,6.50) -- (2.50,6.50) ;
\draw (0.25,6.75) -- (2.25,6.75) ;
\draw (0.00,7.00) -- (2.75,6.25) ;
\draw (2.75,0.25) -- (4.00,1.00) ;
\draw (2.50,0.50) -- (4.25,0.75) ;
\draw (2.25,0.75) -- (4.50,0.50) ;
\draw (2.00,1.00) -- (4.75,0.25) ;
\draw (2.75,2.25) -- (4.00,3.00) ;
\draw (2.50,2.50) -- (4.75,2.25) ;
\draw (2.25,2.75) -- (4.25,2.75) ;
\draw (2.00,3.00) -- (4.50,2.50) ;
\draw (2.75,4.25) -- (4.50,4.50) ;
\draw (2.50,4.50) -- (4.00,5.00) ;
\draw (2.25,4.75) -- (4.25,4.75) ;
\draw (2.00,5.00) -- (4.75,4.25) ;
\draw (2.75,6.25) -- (4.75,6.25) ;
\draw (2.50,6.50) -- (4.25,6.75) ;
\draw (2.25,6.75) -- (4.50,6.50) ;
\draw (2.00,7.00) -- (4.00,7.00) ;
\draw (4.75,0.25) -- (6.75,0.25) ;
\draw (4.50,0.50) -- (6.25,0.75) ;
\draw (4.25,0.75) -- (6.50,0.50) ;
\draw (4.00,1.00) -- (6.00,1.00) ;
\draw (4.75,2.25) -- (6.50,2.50) ;
\draw (4.50,2.50) -- (6.75,2.25) ;
\draw (4.25,2.75) -- (6.25,2.75) ;
\draw (4.00,3.00) -- (6.00,3.00) ;
\draw (4.75,4.25) -- (6.50,4.50) ;
\draw (4.50,4.50) -- (6.75,4.25) ;
\draw (4.25,4.75) -- (6.25,4.75) ;
\draw (4.00,5.00) -- (6.00,5.00) ;
\draw (4.75,6.25) -- (6.00,7.00) ;
\draw (4.50,6.50) -- (6.50,6.50) ;
\draw (4.25,6.75) -- (6.25,6.75) ;
\draw (4.00,7.00) -- (6.75,6.25) ;
   
\end{tikzpicture}
\caption{Our construction for \Cref{mainthm}. Horizontal edges are in $<_1$, vertical edges are in $<_2$. Most diagonal edges are omitted for clarity.}
\label{figure1}
\end{center}
\end{figure}
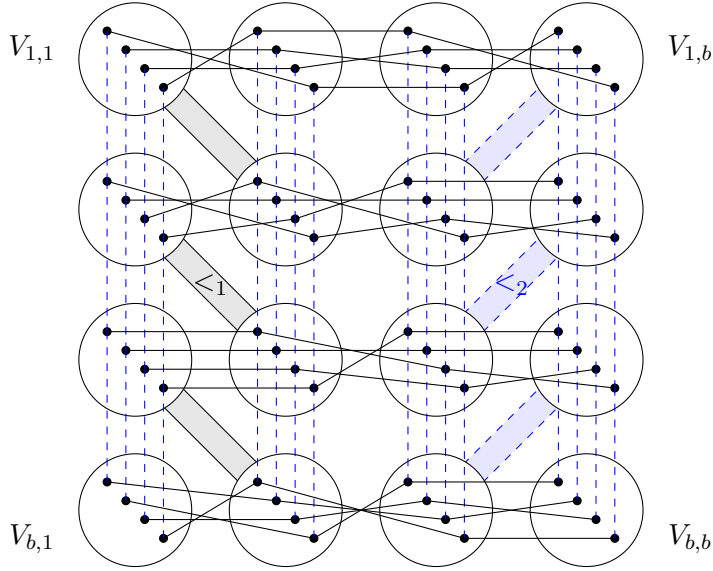

In this section, we prove \Cref{mainthm}. We start with describing our construction (see \Cref{figure1}).

Let $a= n^{1/3}(\frac{\log n}{\log \log n})^{2/3}$ and $b= n^{1/3}(\frac{\log\log n}{\log n})^{1/3}$. Then $n=ab^{2}$. Let $\{V_{i,j}\}_{(i,j)\in [b]^{2}}$ be a family of $b^{2}$ pairwise disjoint sets of size $a$. We will define two disjoint partial orders $<_{1}$ and $<_{2}$ on $V=\bigcup_{(i,j)\in [b]^{2}}V_{i,j}$, where the vertices in each ``row'' $R_{i}=\bigcup_{j=1}^{b}V_{i,j}$ will form $a$ disjoint chains in $<_{1}$, the vertices in each ``column'' $C_{j}=\bigcup_{i=1}^{b}V_{i,j}$ will form $a$ disjoint chains in $<_{2}$, and any two vertices not in the same row and column will be comparable in exactly one of the partial orders.

Actually, $<_1$ will be a random partial order, where each such chain starts at a random element of some $V_{i,1}$, and continues in $V_{i,2},\dots, V_{i,b}$ by selecting each successor uniformly at random. More precisely, take a uniformly random bijection $f_{i,j}:[a]\to V_{i,j}$ independently for every pair $(i,j)\in [b]^2$. Then every vertex in $V$ can be uniquely written as $f_{i,j}(k)$ for some $i,j\in[b], k\in[a]$. Now for two vertices $v=f_{i,j}(k)$ and $w=f_{i',j'}(k')$, we define $v<_{1}w$ iff
\begin{itemize}
	\item  $i<i'$ and $j<j'$, or
	\item  $i=i'$, $j<j'$ and $k=k'$.
\end{itemize}
For fixed $i$ and $k$, we refer to the set of vertices $\{f_{i,j}(k) : j\in[b]\}$ as the $k$'th chain in $R_i$.

The definition of $<_{2}$ is similar, except we do not need it to be random. So fix any bijections $g_{i,j}:[a]\to V_{i,j}$. For $v=g_{i,j}(\ell)$ and $w=g_{i',j'}(\ell')$, we define $v<_{2}w$ iff
\begin{itemize}
	\item  $i<i'$ and $j>j'$, or
	\item  $i<i'$, $j=j'$ and $\ell=\ell'$.
\end{itemize} 
Once again, we call the set $\{g_{i,j}(\ell) : i\in[b]\}$ the $\ell$'th chain in $C_j$.

\medskip
Let $G$ be the union of the comparability graphs of $<_{1}$ and $<_{2}$. \Cref{prop:indepbound,prop:cliquebound} below will show that the largest homogeneous sets in $G$ have size $a= n^{1/3}(\frac{\log n}{\log \log n})^{2/3}$ whp.\footnote{We say that an event holds \emph{with high probability} (whp) if it holds with probability $1-o(1)$ as $n\to \infty$.} In fact, $G$ contains both cliques and independent sets of size roughly $a$.

\begin{prop} \label{prop:indepbound}
$\alpha(G)=a$. 
\end{prop}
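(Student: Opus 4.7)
The plan is to prove both $\alpha(G)\ge a$ and $\alpha(G)\le a$, the former being essentially immediate and the latter reducing, via a structural observation, to an easy chain-partition bound. No randomness is needed; the proof is purely deterministic.

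For the lower bound I would just observe that every single cell $V_{i,j}$ is an independent set in $G$ of size $a$. Indeed, inspecting the definitions, neither $<_1$ nor $<_2$ ever relates two vertices of the same $V_{i,j}$ (the rules for $<_1$ with $i=i'$ require $j<j'$, and symmetrically for $<_2$). So the $a$ vertices of $V_{i,j}$ form a clique-free set in $G$.

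For the upper bound, let $S$ be an independent set in $G$. The key structural step is to show that all cells meeting $S$ lie in a common row $R_i$ or a common column $C_j$. If not, $S$ has representatives $v\in V_{i,j}$ and $w\in V_{i',j'}$ with $i\ne i'$ and $j\ne j'$; after assuming $i<i'$, the case $j<j'$ makes $v<_1 w$ by the first bullet defining $<_1$, while $j>j'$ makes $v<_2 w$ by the first bullet defining $<_2$. Either way $vw$ is an edge of $G$, a contradiction. So WLOG $S\subseteq R_i$ (the column case is analogous).

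Within $R_i$, the partial order $<_2$ contributes no comparabilities (its defining rules all require strict inequality of the row index), and two vertices $f_{i,j}(k)$ and $f_{i,j'}(k')$ with $j<j'$ are $<_1$-comparable precisely when $k=k'$. Thus the edges of $G[R_i]$ are exactly those within the $a$ chains $\{f_{i,j}(k):j\in[b]\}$, $k\in[a]$, which partition $R_i$. An independent set meets each chain in at most one vertex, giving $|S|\le a$. The column case is identical with the $g_{i,j}(\ell)$-chains. There is no real obstacle: the whole content of the proposition is the case analysis showing $S$ is trapped in a single row or column, after which the chain structure does the rest.
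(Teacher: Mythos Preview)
Your proof is correct and follows essentially the same approach as the paper: show each $V_{i,j}$ is independent for the lower bound, and for the upper bound argue that any independent set is confined to a single row or column (since vertices in distinct rows and distinct columns are always comparable in $<_1$ or $<_2$), where the $a$-chain partition forces $|S|\le a$. Your write-up is slightly more detailed in justifying the row/column confinement and the chain structure, but the argument is the same.
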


\begin{proof}
$V_{i,j}$ is independent, so $\alpha(G)\ge a$. Let $I\subs V$ be any independent set of $G$. If $v\in V_{i,j}$ and $w\in V_{i',j'}$ such that $vw$ is not an edge of $G$, then we must have $i=i'$ or $j=j'$. Hence, there is an $l\in [b]$ such that $I\subs R_{l}$ or $I\subs C_{l}$. Without loss of generality, $I\subs R_{l}$. But $I$ intersects each of the $a$ chains in $R_{l}$ in at most one element, so $|I|\le a$ and hence $\alpha(G)\le a$.
\end{proof}

The following observation is a convenient characterization of maximal cliques.

\begin{claim} \label{lem:maxclique}
Every maximal clique $S$ in $G$ can be written as $S=(\cup_{i\in [b]} X_i)\cap (\cup_{j\in [b]} Y_j)$, where $X_i$ is a chain in $R_i$ and $Y_j$ is a chain in $C_j$ for every $i,j\in[b]$.
\end{claim}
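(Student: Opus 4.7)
The plan is to analyze how $S$ meets each row and column, and then verify the claimed equality by checking both inclusions.

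First observation: by a direct check of the definitions, two distinct vertices in the same row $R_i$ are never comparable in $<_2$ (both rules of $<_2$ require $i<i'$), and two distinct vertices in the same column $C_j$ are never comparable in $<_1$. Hence any two vertices of $S$ lying in a common row $R_i$ must be $<_1$-comparable; since they share the index $i$, the definition of $<_1$ forces them to share the $k$-index, i.e.\ to lie on a common chain of $R_i$. So $S\cap R_i$ is contained in a single chain $X_i$ of $R_i$, and symmetrically $S\cap C_j$ lies on a single chain $Y_j$ of $C_j$.

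Next, I would show that $S\cap R_i\ne \emptyset$ (and $S\cap C_j\ne\emptyset$) for every $i,j$, so that $X_i$ and $Y_j$ are uniquely determined. Suppose for contradiction $S\cap R_i=\emptyset$. Pick any $j\in[b]$ and let $v$ be the unique element of $Y_j\cap V_{i,j}$ (or any element of $V_{i,j}$ if $S\cap C_j=\emptyset$). I claim $v$ is adjacent in $G$ to every $w\in S$: writing $w\in V_{i',j'}$, we automatically have $i'\ne i$ since $S\cap R_i=\emptyset$; if $j'=j$ then $v,w$ both lie on $Y_j$ and are $<_2$-comparable, and if $j'\ne j$ then one of the first rules of $<_1$ or $<_2$ applies directly (depending on whether $j<j'$ or $j>j'$). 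Thus $S\cup\{v\}$ is a clique, contradicting maximality. The symmetric argument gives $S\cap C_j\ne\emptyset$ too.

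Finally, $S\subseteq \bigcup_i X_i$ and $S\subseteq \bigcup_j Y_j$ by construction, so $S\subseteq (\bigcup_i X_i)\cap (\bigcup_j Y_j)$. For the reverse inclusion, take $v\in X_i\cap Y_j\subseteq V_{i,j}$ (the indices $i,j$ are determined by $v$). For any $w\in S\cap V_{i',j'}$: if $i=i'$, then $v,w\in X_i$ lie on a common chain of $R_i$ and are $<_1$-comparable; if $j=j'$, the same argument with $Y_j$ gives $<_2$-comparability; and if $i\ne i'$ and $j\ne j'$, the first rules of $<_1,<_2$ again handle the two remaining sub-cases. So $v$ is adjacent to every element of $S$, and maximality forces $v\in S$. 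The only real subtlety is the nonemptiness step in the second paragraph: without invoking maximality there, one would only obtain $S\subseteq(\bigcup_i X_i)\cap(\bigcup_j Y_j)$ rather than equality.
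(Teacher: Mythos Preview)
Your proof is correct and follows essentially the same idea as the paper's: a clique can meet at most one chain in each row and each column, and the intersection $(\bigcup_i X_i)\cap(\bigcup_j Y_j)$ is itself a clique, so maximality yields equality.

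The one difference is that your nonemptiness step is an unnecessary detour. The paper simply picks $X_i$ arbitrarily whenever $S\cap R_i=\emptyset$ (and similarly for columns); since $(\bigcup_i X_i)\cap(\bigcup_j Y_j)$ is a clique for \emph{any} choice of chains, and it contains $S$, maximality gives $S=(\bigcup_i X_i)\cap(\bigcup_j Y_j)$ in one stroke. Your final paragraph already contains exactly the verification needed for this (your case analysis shows any $v$ in the intersection is adjacent to every $w\in S$, which works regardless of whether $S$ meets row $i$), so your closing remark that nonemptiness is ``the only real subtlety'' is off: it is not needed at all, and dropping it makes the argument shorter.
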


\begin{proof}
Let $S$ be a clique in $G$. Then $S$ intersects at most one chain in $R_i$ for each $i\in[b]$, because two vertices of $R_i$ in different chains are incomparable in both $<_1$ and $<_2$. Similarly, $S$ intersects at most one chain in $C_j$ for each $j\in[b]$, so $S\subs (\cup_{i\in [b]} X_i)\cap (\cup_{j\in [b]} Y_j)$ for some chains $X_i$ and $Y_j$. But $(\cup_{i\in [b]} X_i)\cap (\cup_{j\in [b]} Y_j)$ is always a clique, so we must have equality if $S$ is maximal.
\end{proof}

\begin{prop} \label{prop:cliquebound}
With high probability, $a/20\le \omega(G)\le a$.
\end{prop}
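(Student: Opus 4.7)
The plan is to apply \Cref{lem:maxclique} to parameterize every maximal clique $S$ of $G$ by a tuple $(k_1,\dots,k_b,\ell_1,\dots,\ell_b)\in[a]^{2b}$, where $k_i$ selects the chain $X_i$ in $R_i$ and $\ell_j$ selects the chain $Y_j$ in $C_j$. Since $X_i\cap V_{i,j}=\{f_{i,j}(k_i)\}$ and $Y_j\cap V_{i,j}=\{g_{i,j}(\ell_j)\}$, the size satisfies
\[
|S|=\bigl|\{(i,j)\in[b]^{2}:f_{i,j}(k_i)=g_{i,j}(\ell_j)\}\bigr|,
\]
which, for any fixed tuple, is a sum of $b^{2}$ independent $\mathrm{Bernoulli}(1/a)$ random variables of mean $\mu:=b^{2}/a$, by independence of the bijections $f_{i,j}$.

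For the upper bound $\omega(G)\le a$, I would apply a standard Chernoff tail bound (valid since $a\gg\mu$) to obtain $\Pr[|S|\ge a]\le(eb^{2}/a^{2})^{a}$, and then union-bound over all $a^{2b}$ tuples. Taking logarithms and using the numerical identities $a/b=\log n/\log\log n$, $\log(a/b)\sim\log\log n$, $\log a\sim\tfrac{1}{3}\log n$, and the consequence $b\log n=a\log\log n$, the log-probability of the bad event simplifies to $2b\log a-2a\log(a/b)+O(a)\sim(\tfrac{2}{3}-2)b\log n=-\tfrac{4}{3}b\log n$, which tends to $-\infty$; hence whp no tuple produces a clique of size $a$.

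For the lower bound $\omega(G)\ge a/20$, I would fix $\ell_j=1$ for every $j$ and choose $(k_i)$ adaptively. Setting $Z_{i,j}:=f_{i,j}^{-1}(g_{i,j}(1))\in[a]$, the clique size becomes $\sum_{i,j}\mathbf{1}[k_i=Z_{i,j}]$ and decouples over rows: for each $i$ independently one picks $k_i\in[a]$ to maximize $|\{j:Z_{i,j}=k_i\}|$. Since the $Z_{i,j}$ are i.i.d.\ uniform on $[a]$, this per-row maximum equals the maximum bin-load $L_i$ when $b$ balls are thrown into $a$ bins, and the $L_i$ are i.i.d.\ across $i$. Hence it suffices to prove $\sum_i L_i\ge a/20$ whp.

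The main obstacle is the max-load estimate on each $L_i$. Setting $k:=\lceil\log n/(8\log\log n)\rceil$, the routine estimate $\binom{b}{k}/a^{k-1}\sim a(b/a)^k/k!$ combined with $a=n^{1/3+o(1)}$, $(b/a)^k=n^{-1/8+o(1)}$, and $k!=n^{1/8+o(1)}$ shows that the expected number of ``heavy'' bins (those receiving at least $k$ balls) is $n^{\Omega(1)}\to\infty$. A Chebyshev second-moment argument, using that heavy-bin indicators at distinct bins are pairwise negatively correlated in balls-in-bins, then gives $\Pr[L_i\ge k]=1-o(1)$. Finally, a Chernoff bound applied to the i.i.d.\ indicators $\mathbf{1}[L_i\ge k]$ yields $|\{i:L_i\ge k\}|\ge b/2$ whp, from which $\sum_i L_i\ge bk/2\ge a/20$ for large $n$, using $bk=a/8$.
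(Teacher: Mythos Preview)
Your proposal is correct and follows essentially the same approach as the paper. The upper bound is identical: parameterize maximal cliques via \Cref{lem:maxclique}, bound the probability that a fixed tuple yields $a$ coincidences by $(eb^2/a^2)^a$, and union-bound over the $a^{2b}$ tuples. For the lower bound, you and the paper both fix one family of chains and optimize the other, reducing the per-index contribution to a max-load problem with $b$ balls in $a$ bins; the only real difference is that the paper quotes the Raab--Steger theorem to get each max load $\ge \frac{\log a}{3\log\log n}$ whp and then applies Markov over the $b$ indices, whereas you supply a direct second-moment argument (with threshold $k=\lceil\log n/(8\log\log n)\rceil$) and then Chernoff over the i.i.d.\ indicators, which is a slightly more self-contained route to the same conclusion.
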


\begin{proof}
The crucial observation we need here is that each chain in $R_i$ intersects each chain in $C_j$ with probability $1/a$, i.e., for every $i,j\in[b]$ and $k,\ell\in [a]$, 
\[ \Pr[ f_{i,j}(k)=g_{i,j}(\ell) ] = 1/a, \]
and that these events are independent for different pairs $(i,j)$.

Now to prove the upper bound, fix $k_1,\dots,k_b,\ell_1,\dots,\ell_b\in[a]$, and let $X_i$ and $Y_j$ be the $k_i$'th and $\ell_j$'th chains in $R_i$ and $C_j$, respectively, for every $i,j\in[b]$. The probability that $\bigcup_{i=1}^b X_i$ intersects $\bigcup_{j=1}^b Y_j$ in at least $a$ elements is then at most $\binom{b^2}{a}\cdot (\frac{1}{a})^a \le (\frac{eb^2}{a^2})^a$.
Using \Cref{lem:maxclique} and a union bound over all choices of $k_i$ and $\ell_j$, we get that the probability that $G$ contains a clique of size $a$ is at most
\[ a^{2b} \cdot \left(\frac{eb^2}{a^2}\right)^a = \exp \Big( 2b\log a - 2a\log (a/b) + a \Big).\]
Here $a=b\frac{\log n}{\log\log n}<n^{1/2}$, so $\log(a/b) =\log\log n-\log\log\log n>\frac{2}{3}\log\log n$ and $\log a <\frac{1}{2}\log n$. Plugging this in, we get that the probability of a clique larger than $a$ is at most
\[ \exp \left( b\log n - \frac{4}{3}a \log\log n +a \right) = \exp \left( b\log n - \frac{4}{3}b \log n +b\frac{\log n}{\log\log n} \right) = o(1).\]

\medskip
For the lower bound, $X_i$ will be the first chain in $R_i$ for every $i\in [b]$, and then for each $j\in[b]$, we select $Y_j$ to be the chain in $C_j$ that has the largest intersection with $\cup_{i\in[b]} X_i$. We claim that whp this intersection has size at least $\frac{\log a}{3\log\log n}$ for at least half of the $j\in[b]$.

Indeed, finding the maximum intersection size is equivalent to the famous ``balls into bins'' problem, where $b$ balls are thrown into $a$ bins independently and uniformly at random, and we want to find the maximum number of balls in the same bin.

\begin{theorem}[Raab--Steger, \cite{RS98}] \label{thm:ballsbins}
If $ \frac{a}{\polylog a} \le b \ll a\log a$, and $b$ balls are thrown into $a$ bins independently and uniformly at random, then whp the maximum number of balls in the same bin is
\[ \frac{\log a}{\log \frac{a\log a}{b}} (1+o(1)). \]
\end{theorem}

In our case, $f_{i,j}(1)$ is an element of a uniformly random chain in $C_j$, independently for each $i$, so by \Cref{thm:ballsbins}, and using the fact that $ \frac{a\log a}{b} \le \log^2 n$, we get that $N_j=|Y_j \cap (\cup_{i\in [b]}X_i)|\ge \frac{\log a}{3\log\log n}$ whp, for each fixed $j\in[b]$. This means that the expected number of indices $j$ such that $N_j<\frac{\log a}{3\log\log n}$ is $o(b)$, so by Markov's inequality, $N_j\ge \frac{\log a}{3\log\log n}$ for at least $b/2$ different $j\in [b]$ whp. Hence,
\[
\sum_{j\in[b]} N_j = \Big|\big(\bigcup_{i\in [b]} X_i\big)\cap \big(\bigcup_{j\in [b]} Y_j\big)\Big| \ge \frac{b}{2}\cdot \frac{\log a}{3\log\log n} \ge \frac{b}{18} \cdot \frac{\log n}{\log\log n} = \frac{a}{18}
\]
whp, so $(\cup_{i\in [b]} X_i)\cap (\cup_{j\in [b]} Y_j)$ is a clique we were looking for.
\end{proof}

\section{Small bicliques}\label{sect:bipartite}

In this section, we prove \Cref{thm:bipartite}. We can actually prove a slightly stronger result.

\begin{theorem}\label{epsilon}
	For every $\eps>0$ and positive integer $r$, there is a constant $c$ such that the following holds. For $n$ sufficiently large, there is a graph $G$ on $n$ vertices that is the union of $r$ comparability graphs, $G$ has at most $n^{1+\eps}$ edges, and the complement of $G$ does not contain a biclique of size $\frac{cn}{(\log n)^r}$.
\end{theorem}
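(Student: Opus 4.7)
My plan is to construct $G$ via a layered random chain structure. Set $k = \lceil \log n \rceil$, think of the vertex set $V = [n]$, and for each $i \in [r]$ sample, independently, a uniformly random partition $\mathcal{P}_i$ of $V$ into $n/k$ blocks of size $k$, together with a uniformly random linear order on each block. Define $G_i$ to be the resulting comparability graph---a disjoint union of $n/k$ chains of length $k$---and set $G = \bigcup_{i=1}^{r} G_i$. By construction each $G_i$ has $(n/k)\binom{k}{2} = O(n\log n)$ edges, so $|E(G)| \leq r \cdot O(n\log n) = n^{1+o(1)} \leq n^{1+\eps}$ for $n$ large enough, which settles the edge-count part of the statement.

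The biclique bound is the main content. For disjoint $A,B \subseteq V$ with $|A| = |B| = s := cn/(\log n)^r$, the event that $(A,B)$ forms a biclique in $\overline{G}$ says that for every $i \in [r]$, no block of $\mathcal{P}_i$ meets both $A$ and $B$. For a single partition, treating the blocks as approximately independent random $k$-subsets of $V$, the expected number of blocks meeting both $A$ and $B$ is $\Theta(s^{2}k/n)$, and a Janson-type inequality gives probability at most $\exp(-\Omega(s^{2}k/n))$ of no such block. Independence across $i$ turns this into $\exp(-\Omega(rs^{2}k/n))$, and a union bound over the at most $\binom{n}{s}^{2}\le(en/s)^{2s}$ candidate pairs shows that the expected number of $s\times s$ bicliques in $\overline{G}$ is at most $\exp\bigl(2s\log(en/s)-\Omega(rs^{2}k/n)\bigr)$, which I would arrange to be $o(1)$ by an appropriate choice of $c=c(r,\eps)$.

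The main obstacle is pushing the union bound through in the regime $s = cn/(\log n)^r$: a naive independent-uniform analysis of the type just sketched only handles $s\gtrsim n(\log\log n)^{r-1}/(\log n)^{r}$, i.e., it only recovers the earlier Fox--Pach bound. Removing the extra $(\log\log n)^{r-1}$ factor---the whole point of \Cref{thm:bipartite} and of the present theorem---should require either (i) correlating the $r$ partitions in a pseudorandom or design-theoretic fashion so that for any pair $(A,B)$ at least one of the $r$ partitions is essentially forced to have a block meeting both, or (ii) sharpening the ``no block meets both'' probability via second moment / Janson rather than first-moment estimates, carefully tracking the dependencies introduced by a fixed-block-size random partition. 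Keeping the edge count capped at $n^{1+\eps}$---which forces $k$ to be of order $\log n$---while still executing this refined analysis is what I expect to be the hardest part of the proof.
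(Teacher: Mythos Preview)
Your write-up is a sketch that you yourself concede does not reach the target bound; the ``hardest part of the proof'' is exactly the part you leave open. So this is not yet a proof. Let me point out why the approach, and not just the analysis, is the problem, and then contrast it with what the paper does.

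\medskip
\textbf{Why independent random chain-partitions cannot give $(\log n)^{r}$.}
With $k=\Theta(\log n)$ and $s$ the biclique size, your own estimate says the probability that \emph{all} $r$ partitions avoid $(A,B)$ is $\exp(-\Omega(rs^{2}k/n))$, while the union bound costs $\exp(O(s\log(n/s)))$. Balancing gives $rsk/n\asymp \log(n/s)$, i.e.\ $s\asymp \frac{n\log\log n}{r\log n}$. The extra independent partitions only change the constant in the exponent, so you get one factor of $\log n$ in the denominator, not $r$ of them; in particular the method does not even recover the Fox--Pach bound $n(\log\log n)^{r-1}/(\log n)^{r}$ for $r\ge 2$. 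Sharpening Janson will not help: the first-moment calculation is already essentially tight. What is missing is a \emph{product} structure in which the $r$ layers genuinely interact, so that avoiding all of them costs $r$ multiplicative factors of $\log n$.

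\medskip
\textbf{How the paper's construction works.}
The paper's construction is deterministic and structurally different. Set $b\asymp \eps\log n$ and $a=n/b^{r}$; fix a $3$-regular $(a,3,\delta)$-expander $H$. The vertex set is partitioned into cells $V_{\alpha}$, $\alpha\in[b]^{r}$, each identified with $V(H)$. The orders $<_{s}$ are: $v<_{s}w$ iff the cells satisfy $\alpha\prec_{s}\beta$ in an $\ell_{\infty}$-type order on $[b]^{r}$ \emph{and} $h(v),h(w)$ are adjacent in the graph power $H^{|\alpha-\beta|_{\infty}}$. The edge count follows because $H^{k}$ has degree $\le 3^{k+1}$, giving maximum degree $\le b^{r}3^{b}=n^{o(1)+\eps}$. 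For the biclique bound, the key expander fact is that if $X,Y\subseteq V(H)$ span no edge in $H^{k}$, then $|X||Y|\le a^{2}(1+\delta)^{-k}$. Combined with a hyperplane-separation lemma that aligns a constant fraction of any pair $(A,B)$ under a single $\prec_{s}$, one sums $|X_{\alpha}||Y_{\beta}|\le a^{2}(1+\delta)^{-|\alpha-\beta|_{\infty}}$ over the relevant pairs and obtains $|A||B|=O(a^{2})$, i.e.\ $s=O(a)=O(n/(\log n)^{r})$.

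The $r$ powers of $\log n$ come from the $r$-dimensional grid $[b]^{r}$ with $b\approx\log n$, and the correlation you correctly suspected is necessary is supplied by the shared expander $H$ rather than by random partitions.
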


Clearly, if we choose $\eps<1$, then the bound on the number of edges ensures that there is no biclique of size $\frac{cn}{(\log n)^r}$ in $G$, either, establishing \Cref{thm:bipartite}.

Before we explain the construction, we need to make some technical definitions. For a graph $H$ and $U\subs V(H)$, let $N_H[U]=U\cup \bigcup_{v\in U} N_H(v)$ denote the closed neighborhood of $U$ in $H$. An $(n,d,\lambda)$-expander graph, is a $d$-regular graph $H$ on $n$ vertices such that for every $U\subs V$ satisfying $|U|\le |V|/2$, we have $|N_H[U]|\ge (1+\lambda)|U|$. A well-known result of Bollob\'as \cite{B88} shows that a random 3-regular graph on $n$ vertices is whp an $(n,3,\delta)$-expander for some $\delta>0$. For explicit constructions see, e.g., \cite{M94}.

Let $H^k$ denote the usual graph power of $H$, that is, $V(H^k)=V(H)$, and $v$ and $w$ are adjacent in $H^k$ if $H$ contains a path of length at most $k$ between $v$ and $w$. Here, we allow loops, so every vertex is joined to itself in $H^{k}$ for $k\ge 0$. We will use the following easy property of expander graphs. 

\begin{claim}\label{expander} 
	Let $H$ be an $(n,d,\lambda)$-expander and let $k\ge 1$. If $X,Y\subs V(H)$ such that there is no edge between $X$ and $Y$ in $H^{k}$, then $|X||Y|\leq n^{2}(1+\lambda)^{-k}$.
\end{claim}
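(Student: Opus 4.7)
My plan is to track the iterated closed neighborhoods of $X$ and $Y$ under the adjacency of $H$ and apply the expansion condition step by step. Write $X_i = N_H^{(i)}[X]$ and $Y_j = N_H^{(j)}[Y]$ for the $i$- and $j$-fold iterated closed neighborhoods. The hypothesis that $X$ and $Y$ are non-adjacent in $H^k$ says exactly that every vertex of $Y$ has $H$-distance strictly greater than $k$ from $X$, which is the same as $X_i \cap Y_j = \emptyset$ for all non-negative $i,j$ with $i+j \le k$.

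Next, I would iterate the expansion condition, whose crucial feature is that it only guarantees $|N_H[U]| \ge (1+\lambda)|U|$ while $|U| \le n/2$. Two cases arise. In the comfortable case where $|X_i|\le n/2$ for every $i\le k-1$, repeated expansion yields $(1+\lambda)^k|X|\le |X_k|\le n$, and combining with the trivial bound $|Y|\le n$ already gives the claim. Otherwise, let $i+1\in\{1,\dots,k\}$ be the first index with $|X_{i+1}|>n/2$. Then $(1+\lambda)^{i+1}|X|\le |X_{i+1}|$, and because $X_{i+1}$ is disjoint from $Y_{k-i-1}$ and already occupies more than half of the vertex set, we get $|Y_{k-i-1}|\le n-|X_{i+1}|<n/2$. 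This forces each of $Y_0,\dots,Y_{k-i-1}$ to stay below $n/2$, so the expander inequality can be applied $k-i-1$ times to give $(1+\lambda)^{k-i-1}|Y|\le |Y_{k-i-1}|\le n-|X_{i+1}|$.

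Multiplying these two inequalities and using the trivial bound $|X_{i+1}|(n-|X_{i+1}|)\le n^2/4$ yields $|X||Y|\le n^2(1+\lambda)^{-k}/4$, which is even a little stronger than the stated claim. I do not foresee a real obstacle here; the only point that requires a moment of care is observing that in the second case the set $Y$ really can be grown at full expansion rate for all $k-i-1$ steps, and this is forced precisely because $X_{i+1}$ already takes up more than half of $V(H)$, so every $Y_j$ with $j\le k-i-1$ must fit into the smaller complementary side.
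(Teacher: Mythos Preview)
Your proof is correct and follows essentially the same approach as the paper's: grow the iterated closed neighborhoods of $X$ and $Y$ and split into cases according to when $|X_i|$ first exceeds $n/2$. Your bookkeeping in the second case is in fact slightly sharper---by multiplying $(1+\lambda)^{i+1}|X|\le |X_{i+1}|$ against $(1+\lambda)^{k-i-1}|Y|\le n-|X_{i+1}|$ and using $|X_{i+1}|(n-|X_{i+1}|)\le n^2/4$, you avoid the paper's appeal to $1+\lambda\le 2$ and obtain the stronger bound $|X||Y|\le n^2(1+\lambda)^{-k}/4$.
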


\begin{proof}
Let $X_i=N_{H^i}[X]$ and $Y_i=N_{H^i}[Y]$ for $i=0,1,\dots,k$. As there are no edges between $X$ and $Y$, we know that $X_i$ and $Y_{k-i}$ are disjoint for every $i$. Let $\ell\in \{0,1,\dots,k\}$ be largest so that $|X_{\ell}|\le n/2$.
    
If $\ell=k$, then we can use the definition of expanders to show by induction on $j$ that $|X_{k-j}|\le n(1+\lambda)^{-j}/2$. In particular, $|X|\le n(1+\lambda)^{-k}/2<n(1+\lambda)^{-k}$, and hence $|X||Y|\le n^2(1+\lambda)^{-k}$.

On the other hand, if $\ell<k$, then $|X_{\ell+1}|>n/2$ and hence $|Y_{k-\ell-1}|\le n/2$. A similar inductive argument then gives $|X|\le n(1+\lambda)^{-\ell}/2$, as well as $|Y|\le  n(1+\lambda)^{-(k-\ell-1)}/2$. As $1+\lambda\le 2$, this gives
\[ |X||Y| \le n^2(1+\lambda)^{-k+1}/4 \le n^2(1+\lambda)^{-k}. \]
\end{proof}

Fox \cite{F06} used expander graphs to show that \Cref{epsilon} holds in the case $r=1$. Our proof for general $r$ combines his construction with ideas used in \Cref{mainthm}. In fact, the $r=2$ case of the construction shown below is quite similar to the one we described in the previous section.

\medskip
We first define $r$ auxiliary partial orders $\prec_1,\dots,\prec_r$ on $\mathbb{R}^{r}$ as follows. For $\alpha = (\alpha_1,\alpha_2,\dots,\alpha_r)$, $\beta = (\beta_1,\beta_2,\dots,\beta_r)$, and $1\le s\le r$, let $\alpha\preceq_{s}\beta$ if 
\[ \beta_{s}-\alpha_{s}=\max_{i\in [r]}|\beta_{i}-\alpha_{i}|. \]
It is easy to check that the relations $\prec_{1},\dots,\prec_{r}$ are indeed partial orders, and that for any $\alpha\ne\beta$, there is an $s$ such that $\alpha\prec_s\beta$ or $\beta\prec_s\alpha$ (although this $s$ might not be unique). For $A,B\subs \mathbb{R}^{r}$, let $A\preceq_{s}B$ if $\alpha\preceq_{s} \beta$ holds for every $\alpha\in A$, $\beta\in B$. Also, let $|\beta-\alpha|_{\infty}=\max_{i}|\beta_{i}-\alpha_{i}|$ be the usual $\ell_{\infty}$-norm.

\medskip

We are now ready to describe our construction. Set $b=\frac{\eps\log n}{\log 9}$ and $a=n/b^r$, and fix an $(a,3,\delta)$-expander $H$ on $a$ vertices. 

Let $\{V_{\alpha}\}_{\alpha\in [b]^r}$ be a family of $b^r$ disjoint sets of size $a$, and let  $V=\bigcup_{\alpha\in [b]^r}V_{\alpha}$ be the union of them. If $v\in V_{\alpha}$, we define the \emph{rank} of $v$ as $\rank(v)=\alpha$. We identify the elements of each $V_{\alpha}$ with the vertices of $H$. More precisely, let $h:V\to V(H)$ be a function that is a bijection when restricted to $V_{\alpha}$ for every $\alpha\in [b]^r$.

The partial orders $<_1,\dots,<_r$ are defined on $V$ as follows. For $v\in V_{\alpha}$ and $w\in V_{\beta}$ and every $1\le s\le r$, we let $v<_s w$ if $\alpha\prec_s\beta$, and $h(v)$ and $h(w)$ are joined by an edge in $H^{|\alpha-\beta|_{\infty}}$. Let us first check that these are indeed partial orders. 
 
\begin{claim}
 The relation $<_s$ is a partial order for every $1\le s\le r$.
\end{claim}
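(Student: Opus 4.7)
The plan is to verify the three defining properties of a strict partial order in turn: irreflexivity, asymmetry, and transitivity.

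Irreflexivity and asymmetry follow immediately from the corresponding properties of $\prec_s$ on $\mathbb{R}^r$, which the excerpt already notes. If $v<_s v$ for some $v\in V_\alpha$, then $\alpha\prec_s\alpha$, which is impossible; and $v<_s w$ together with $w<_s v$, for $v\in V_\alpha$, $w\in V_\beta$, would require both $\alpha\prec_s\beta$ and $\beta\prec_s\alpha$, again impossible.

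The content is in transitivity. Suppose $v<_s w$ and $w<_s u$ with $v\in V_\alpha$, $w\in V_\beta$, $u\in V_\gamma$. I have to establish two facts: first, $\alpha\prec_s\gamma$; second, $h(v)h(u)$ is an edge of $H^{|\alpha-\gamma|_\infty}$. The first is just the transitivity of $\prec_s$, so the real work is the second. The key identity is
\[ |\alpha-\beta|_\infty + |\beta-\gamma|_\infty = (\beta_s-\alpha_s)+(\gamma_s-\beta_s) = \gamma_s-\alpha_s = |\alpha-\gamma|_\infty, \]
where the first equality uses $\alpha\prec_s\beta$ and $\beta\prec_s\gamma$, and the last uses $\alpha\prec_s\gamma$. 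By definition of $<_s$, there is a walk of length at most $|\alpha-\beta|_\infty$ from $h(v)$ to $h(w)$ in $H$ and a walk of length at most $|\beta-\gamma|_\infty$ from $h(w)$ to $h(u)$; concatenating them produces a walk of length at most $|\alpha-\gamma|_\infty$ from $h(v)$ to $h(u)$, which is exactly the edge of $H^{|\alpha-\gamma|_\infty}$ we need. If it happens that $h(v)=h(u)$, we fall back on the convention that loops are allowed, so that every vertex is joined to itself in every power of $H$.

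I do not foresee any substantive obstacle. The definition of $<_s$, pairing the rank condition $\alpha\prec_s\beta$ with an edge of $H^{|\alpha-\beta|_\infty}$, is tailored precisely so that the walk-lengths add correctly along $\prec_s$-chains, and the verification amounts to the $\ell_\infty$ triangle inequality combined with walk concatenation.
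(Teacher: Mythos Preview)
Your proposal is correct and follows essentially the same approach as the paper: both arguments reduce transitivity to the identity $|\alpha-\beta|_\infty + |\beta-\gamma|_\infty = |\alpha-\gamma|_\infty$ (valid because $\alpha\prec_s\beta\prec_s\gamma$) and then concatenate the two short walks in $H$. The paper omits irreflexivity and asymmetry as immediate, while you spell them out, but the substance is identical.
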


\begin{proof}
The only thing we need to check is that $<_s$ is transitive. So pick three vertices $u\in V_{\alpha}$, $v\in V_{\beta}$ and $w\in V_{\gamma}$ such that $u<_s v$ and $v<_s w$. Then $\alpha \prec_s \beta \prec_s \gamma$. Also, there is a path $P$ in $G$ of length at most $|\alpha-\beta|_{\infty}$ from $h(u)$ to $h(v)$, and another  path $P'$ of length at most $|\beta-\gamma|_{\infty}$ from $h(v)$ to $h(w)$. But then the union of $P$ and $P'$ contains a path of length at most 
\[ |\alpha-\beta|_{\infty}+|\beta-\gamma|_{\infty} = (\beta_s-\alpha_s) + (\gamma_s-\beta_s) = \gamma_s-\alpha_s =|\alpha-\gamma|_{\infty} \]
(using  $\alpha \prec_s \beta \prec_s \gamma$) from $h(u)$ to $h(w)$, thus indeed, $u<_s w$.
 \end{proof}

Let $G$ be the union of the comparability graphs of $<_s$ over all $1\le s\le r$.
The next lemma bounds the maximum degree, and hence the number of edges of $G$.

\begin{lemma} \label{nedges}
 The maximum degree of $G$ is at most $b^r 3^{b}$.
\end{lemma}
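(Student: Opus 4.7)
The plan is to fix an arbitrary $v \in V_{\alpha}$ and bound $\deg_G(v)$ by splitting its neighborhood according to the rank of each neighbor, writing
\[ \deg_G(v) \;=\; \sum_{\beta \in [b]^r} |N_G(v) \cap V_{\beta}|, \]
and bounding each summand uniformly in terms of the geometry of the expander $H$.

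The key observation I will exploit is that the relations $<_s$ are defined in terms of distances in $H$. If $w \in V_{\beta}$ is adjacent to $v$ in $G$, then $v <_s w$ or $w <_s v$ for some $s$, and in either case the definition forces $h(v)$ and $h(w)$ to be joined by an edge of $H^{k}$ with $k := |\alpha - \beta|_\infty$. In other words, $h(w)$ must lie in the closed ball of radius $k$ around $h(v)$ in $H$. Since $h$ restricts to a bijection $V_{\beta} \to V(H)$, this shows that $|N_G(v) \cap V_{\beta}|$ is at most the size of this ball.

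The remaining step is a purely local ball-size estimate in the $3$-regular graph $H$. A standard tree-like count bounds the closed ball of radius $k$ in a $3$-regular graph by roughly $3 \cdot 2^{k}$. Because $\alpha, \beta \in [b]^{r}$, we always have $k \le b-1$, and this quantity is comfortably at most $3^{b}$. Summing this bound over the $b^{r}$ possible values of $\beta$ gives $\deg_G(v) \le b^{r} \cdot 3^{b}$, as desired.

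I do not foresee any real obstacle here: the whole lemma is a uniform degree bound, and the only substantive ingredient is the translation of $G$-adjacency into membership in a metric ball in $H$, which is immediate from the definition of $<_s$. Everything else is routine counting, and no properties of $H$ beyond $3$-regularity are needed for this step (expansion will only be invoked later, via \Cref{expander}).
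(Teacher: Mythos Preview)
Your proposal is correct and follows essentially the same approach as the paper: fix $v\in V_\alpha$, observe that any $G$-neighbor $w\in V_\beta$ must satisfy that $h(w)$ lies in the radius-$k$ ball around $h(v)$ in $H$ with $k=|\alpha-\beta|_\infty\le b-1$, bound that ball size using only $3$-regularity, and sum over the $b^r$ choices of $\beta$. The only cosmetic difference is that the paper uses the cruder shell bound $1+3+\dots+3^k<3^{k+1}$ rather than your tree bound $\approx 3\cdot 2^k$, but both yield $\le 3^b$ per block and hence the same final estimate.
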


\begin{proof}
Let $v\in V_{\alpha}$. As $H$ is $3$-regular, the maximum degree of $H^k$ is at most $1+3+\dots+3^{k}< 3^{k+1}$. This means that the number of neighbors of $v$ in any $V_{\beta}$ is at most $3^{|\alpha-\beta|_\infty+1}\le 3^b$. Summing this for every $\beta\in[b]^r$, the number of neighbors of $v$ in $G$ is at most $b^{r}3^{b}$.
\end{proof}

It is more difficult to show that the complement of $G$ does not contain a large biclique. 

\medskip
For two sets $X,Y\subs V$, we write $X\preceq_s Y$ if $\rank(x)\preceq_s \rank(y)$ for every $x\in X$ and $y\in Y$. First, we show that for every $X,Y\subs V$, we can find relatively large sets $X'\subs X$, $Y'\subs Y$ and an $s\in [r]$ such that $X'\preceq_{s} Y'$ or $Y'\preceq_{s} X'$.

We start with a geometric claim. We say that a hyperplane $H$ separates two set $A$ and $B$, if $A$ is in one \emph{closed} halfspace determined by $H$, and $B$ is in the other.

\begin{claim}\label{cl:separation}
Let $A$ and $B$ be finite multisets of $\mathbb{R}^{d}$ such that $|A|=|B|=m$, and let $H$ be a hyperplane. There are subsets $A'\subs A$, $B'\subs B$ of size $|A'|=|B'|\ge m/2$ that are separated by a translate of $H$.
\end{claim}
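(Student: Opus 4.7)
My plan is to reduce the claim to a one-dimensional median argument by projecting onto the normal direction of $H$. Let $\vec{n}$ be a unit normal to $H$, and define $\pi\colon\mathbb{R}^{d}\to\mathbb{R}$ by $\pi(p)=\langle p,\vec{n}\rangle$. The translates of $H$ are precisely the level sets $\pi^{-1}(t)$ with $t\in\mathbb{R}$, so finding subsets $A'\subs A$ and $B'\subs B$ separated by a translate of $H$ reduces to finding a threshold $t$ with $\pi(A')\le t\le\pi(B')$ (or with the roles reversed).

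I would then sort the two lists of projections, writing $\pi(A)=\{a_{1}\le\cdots\le a_{m}\}$ and $\pi(B)=\{b_{1}\le\cdots\le b_{m}\}$, and set $k=\lceil m/2\rceil$. The key case distinction is the sign of $a_{k}-b_{m-k+1}$. If $a_{k}\le b_{m-k+1}$, I take $A'$ to be (the preimages of) the $k$ smallest projections in $A$ and $B'$ the $k$ largest projections in $B$; then $\pi(A')\le a_{k}\le b_{m-k+1}\le\pi(B')$, and the hyperplane $\pi^{-1}(a_{k})$ is a separating translate of $H$. In the opposite case $a_{k}>b_{m-k+1}$, using $k\le (m+1)/2$ I get $a_{m-k+1}\ge a_{k}>b_{m-k+1}\ge b_{k}$, so swapping roles and taking $A'$ to be the $k$ largest projections in $A$ and $B'$ the $k$ smallest projections in $B$ yields $\pi(B')\le b_{m-k+1}<a_{k}\le\pi(A')$, again giving separation by a translate of $H$.

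In both branches $|A'|=|B'|=k\ge m/2$, as required. There is no real obstacle here: once the problem has been reduced to one dimension, it is a clean sorting/median argument. The only mild subtlety is ensuring that the two chosen subsets have exactly equal size, which I handle by fixing the common parameter $k=\lceil m/2\rceil$ on both sides of the case analysis. Multiset repetitions cause no issue since we may treat each element of $A$ and $B$ as indexed by its position in the sorted order.
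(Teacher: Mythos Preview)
Your proof is correct and follows essentially the same approach as the paper: project onto the normal direction of $H$ and locate a median-type threshold that leaves at least half of each multiset on opposite sides. The paper phrases this as taking the minimal $t$ for which one of $|H^{-}(t)\cap A|\ge m/2$ or $|H^{-}(t)\cap B|\ge m/2$ holds, whereas you sort the projections and compare $a_k$ with $b_{m-k+1}$; these are two equivalent ways of carrying out the same one-dimensional median argument.
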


\begin{proof}
Let $v$ be a vector orthogonal to $H$, and for $t\in\mathbb{R}$, let $H(t)=\{x\in \mathbb{R}^{d}:\langle v,x\rangle=t\}$, $H^{-}(t)=\{x\in \mathbb{R}^{d}:\langle v,x\rangle\leq t\}$ and $H^{+}(t)=\{x\in \mathbb{R}^{d}:\langle v,x\rangle\geq t\}$, where $\langle.,.\rangle$ denotes the usual dot product on $\mathbb{R}^{d}$. Let $t$ be minimum such that either $|H^{-}(t)\cap A|\geq m/2$ or $|H^{-}(t)\cap B|\geq m/2$. Without loss of generality, suppose that $|H^{-}(t)\cap A|\geq m/2$. Then $|H^{+}(t)\cap B|\geq m/2$ holds as well. Setting $A'$ to be an $\lceil m/2\rceil$-sized subset of $H^{-}(t)\cap A$, and $B'$ to be an $\lceil m/2\rceil$-sized subset of $H^{+}(t)\cap B$, $A'$ and $B'$ are separated by $H(t)$.
\end{proof}

\begin{claim}\label{cl:partition}
Let $X,Y\subs V$ such that $|X|=|Y|=m$. There exist $X'\subs X$, $Y'\subs Y$, and $s\in [r]$ such that $|X'|=|Y'|\geq m2^{-r^{2}}$ and $X'\preceq_{s} Y'$ or $Y'\preceq_{s}X'$.
\end{claim}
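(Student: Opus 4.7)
The plan is to apply \Cref{cl:separation} iteratively $r^2$ times, viewing $X$ and $Y$ as multisets in $\mathbb{R}^r$ via the rank function $v\mapsto \rank(v)\in [b]^r$. I will use the family of $r+2\binom{r}{2}=r^2$ linear functionals consisting of the coordinates $\phi_i(x)=x_i$ for $i\in[r]$ and, for each pair $1\le i<j\le r$, the two functionals $\phi^{\pm}_{ij}(x)=x_i\pm x_j$. For each $\phi$ in this family I apply \Cref{cl:separation} with the hyperplane $\phi=0$, whose translates are exactly the level sets of $\phi$. Each application halves both current sets, so after all $r^2$ steps I obtain $X'\subs X$ and $Y'\subs Y$ of size at least $m\cdot 2^{-r^2}$ such that for every $\phi$ in the family the sign of $\phi(y)-\phi(x)$ is the same for all $(x,y)\in X'\times Y'$.

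The key step is to argue that these sign constraints force $X'\preceq_s Y'$ or $Y'\preceq_s X'$ for some $s\in[r]$. For any pair $i\ne j$ I will use the algebraic identity
\[ \bigl[(y_i-x_i)+(y_j-x_j)\bigr]\cdot\bigl[(y_i-x_i)-(y_j-x_j)\bigr]=(y_i-x_i)^2-(y_j-x_j)^2, \]
whose two bracketed factors are precisely $\phi^{+}_{ij}(y)-\phi^{+}_{ij}(x)$ and $\phi^{-}_{ij}(y)-\phi^{-}_{ij}(x)$. Since the signs of these are constant over $X'\times Y'$, the sign of their product---and hence the comparison $|y_i-x_i|$ vs.\ $|y_j-x_j|$---is consistent across all such pairs. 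This produces a total preorder on $[r]$, from which I select a maximal element $s$; it satisfies $|y_s-x_s|\ge |y_i-x_i|$ for every $i\in[r]$ and every $(x,y)\in X'\times Y'$. The separation by $\phi_s$ then fixes the sign of $y_s-x_s$ throughout $X'\times Y'$: if it is always nonnegative then $y_s-x_s\ge|y_i-x_i|$ for all $i$, which is exactly $\rank(x)\preceq_s\rank(y)$, so $X'\preceq_s Y'$; if always nonpositive, the symmetric conclusion $Y'\preceq_s X'$ holds.

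The main obstacle I anticipate is choosing the right family of hyperplanes: the $\phi^{\pm}_{ij}$ are required to locate the coordinate realizing the $\ell_\infty$-maximum of $y-x$ through the difference-of-squares identity, while the coordinate functionals $\phi_i$ are needed separately to pin down its sign. Once this particular collection is in place, transitivity of the induced preorder on $[r]$ is immediate and the remainder is routine bookkeeping. The count $r+2\binom{r}{2}=r^2$ matches the target size $m\cdot 2^{-r^2}$ exactly.
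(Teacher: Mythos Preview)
Your proof is correct and follows essentially the same approach as the paper: both repeatedly apply \Cref{cl:separation} with the hyperplanes $x_i\pm x_j=\text{const}$ to make the comparison of $|y_i-x_i|$ versus $|y_j-x_j|$ uniform over $X'\times Y'$. The only difference is that you additionally separate by the $r$ coordinate hyperplanes and build a total preorder to locate $s$ and its sign, whereas the paper uses just the $2\binom{r}{2}=r^2-r$ sum/difference hyperplanes and reads off both the index $s$ and the orientation from a single witness pair $(\alpha,\beta)\in A'\times B'$; either way one lands safely above the stated bound $m\cdot 2^{-r^{2}}$.
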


\begin{proof}
Let $A,B\subs \mathbb{R}^{r}$ be the multisets defined as $A=\{\rank(x):x\in X\}$ and $B=\{\rank(y):y\in Y\}$. For $i,j\in [r]$, let $H_{i,j}(t)$ denote the hyperplane in $\mathbb{R}^{r}$ given by the equation $x_{i}-x_{j}=t$, and let $H'_{i,j}(t)$ be the hyperplane given by the equation $x_{i}+x_{j}=t$.

By repeatedly applying \Cref{cl:separation}, we can find $A'\subs A$, $B'\subs B$ and $t_{i,j},t_{i',j'}\in\mathbb{R}$ for $1\leq i<j\leq r$ such that $|A'|=|B'|\geq m2^{-2\binom{r}{2}}>m2^{-r^{2}}$, and the hyperplanes $H_{i,j}(t_{i,j})$ and $H'_{i,j}(t'_{i.j})$ separate $A'$ and $B'$ for $1\leq i<j\leq r$. We will show that $A'\preceq_{s} B'$ or $B'\preceq_{s} A'$ for some $s\in [r]$. 

Fix some $\alpha\in A'$ and $\beta\in B'$. We know that $\alpha\preceq_s \beta$ or $\beta\preceq_s \alpha$ for some $s\in[r]$. We may assume that $\alpha\preceq_s \beta$, i.e., $\beta_s-\alpha_s=\max_{i\in[r]} |\beta_i-\alpha_i|$. We claim that $A'\preceq_s B'$. 

Indeed, we know that $\beta_s-\alpha_s\ge |\beta_i-\alpha_i|$, i.e., $\beta_s-\alpha_s\ge \beta_i-\alpha_i$ and $\beta_s-\alpha_s\ge \alpha_i-\beta_i$ for every $i\in[r]$. Equivalently, $\beta_s-\beta_i\ge \alpha_s-\alpha_i$ and $\beta_s+\beta_i\ge \alpha_s+\alpha_i$ for every $i\in [r]$. However, $A'$ and $B'$ are separated by some $H_{s,i}(t_{s,i})$, so we must have $\beta'_s-\beta'_i\ge t_{s,i} \ge \alpha'_s-\alpha'_i$, and hence $\beta'_s-\alpha'_s\ge \beta'_i-\alpha'_i$ for every $\alpha'\in A'$ and $\beta'\in B'$. Similarly, $A'$ and $B'$ are separated by some $H'_{s,i}(t'_{s,i})$, so we must have $\beta'_s+\beta'_i\ge t'_{s,i} \ge \alpha'_s+\alpha'_i$, and therefore $\beta'_s-\alpha'_s\ge \alpha'_i-\beta'_i$ for every $\alpha'\in A'$ and $\beta'\in B'$.

This means that for every $\alpha'\in A'$, $\beta'\in B'$ and $i\in[r]$, we have $\beta'_s-\alpha'_s\ge |\beta_i-\alpha_i|$, or in other words, $\alpha'\preceq_s \beta'$. So indeed, $A'\preceq_s B'$, and setting $X'=\{x\in X:\rank(x)\in A'\}$ and $Y'=\{y\in Y:\rank(y)\in B'\}$ finishes the proof.
\end{proof}

Now we are prepared to prove that there are no large bicliques in the compement of $G$

\begin{lemma} \label{emptybipartite}
There is a constant $C$ depending only on $r$ such that $\overline{G}$ does not contain a biclique with more than $C\cdot a$ vertices.
\end{lemma}

\begin{proof}
Suppose that $\overline{G}$ contains a biclique with parts $X$ and $Y$ of size $m$. By \Cref{cl:partition}, there are subsets $X'\subs X$ and $Y'\subs Y$ of size $|X'|=|Y'|\ge m2^{-r^{2}}$ such that $X'\preceq_{s} Y'$ or $Y'\preceq_{s} X'$ for some $s\in[r]$. Without loss of generality, assume $X'\preceq_s  Y'$. 
   
For every $\alpha\in [b]^{r}$, let us define $X_{\alpha}=X'\cap V_{\alpha}$ and $Y_{\alpha}=Y'\cap V_{\alpha}$. Also, let $\calS$ be the set of pairs $(\alpha,\beta)\in [b]^r\times [b]^r$ such that $X_{\alpha},Y_{\beta}\ne \emptyset$. Our plan is to give an upper bound on
\begin{equation} \label{cl18:equ1}
  |X'||Y'| = \left(\sum_{\alpha\in [b]^{r}}|X_{\alpha}|\right)\left(\sum_{\beta\in[b]^{r}}|Y_{\beta}|\right)
  = \sum_{(\alpha,\beta)\in \calS} |X_\alpha||Y_\beta|.
\end{equation}

First we bound $|X_\alpha||Y_\beta|$ for a fixed pair $(\alpha,\beta)\in \calS$. Let $k=|\beta-\alpha|_\infty$, then by $\alpha \preceq_s \beta$, we know $k=\beta_s-\alpha_s\ge |\beta_i-\alpha_i|$ for every $i\in [r]$. We also know that no element of $X_{\alpha}$ is comparable to any element of $Y_{\beta}$ with respect to $<_{s}$, so there are no edges between the corresponding vertices $h(X_{\alpha})$ and $h(Y_{\beta})$ in the graph $H^k$. Therefore, by \Cref{expander}, we have $|X_{\alpha}||Y_{\beta}|\le a^{2}(1+\delta)^{-k}$.

Now let us count the number of pairs $(\alpha,\beta)\in \calS$ such that $|\beta-\alpha|_{\infty}=k$. Fix one such pair $(\alpha^*,\beta^*)$. As $X'\preceq_s Y'$, we have $\alpha^*\preceq_s \beta$ and $\alpha\preceq_s \beta^*$ for every $(\alpha,\beta)\in \calS$.
Hence, if $\alpha\preceq_s \beta$ and $|\beta-\alpha|_{\infty}=k$, then 
\[ \alpha^*_s-k \leq \beta_s-k = \alpha_s \le \beta^*_s =\alpha^*_s+k, \]
and hence $|\beta-\alpha^*|_\infty= \beta_s-\alpha^*_s \le 2k$ and $|\beta^*-\alpha|_\infty= \beta^*_s-\alpha_s \le 2k$. 

This shows that the number of such pairs $(\alpha,\beta)$ is at most $(4k+1)^{2r}$, as there are at most $4k+1$ possibilities for each of the $r$ coordinates of $\alpha$ and $\beta$. Plugging this in \eqref{cl18:equ1}, we get 
\[ |X'||Y'|\leq \sum_{k=0}^{b}a^{2}(1+\delta)^{-k}(4k+1)^{2r}. \]
As $\delta>0$, the sum $\sum_{k=0}^{\infty}(1+\delta)^{-k}(4k+1)^{2r}$ converges, so there is a constant $D$ depending only on $r$ such that $|X'||Y'|\leq Da^{2}$. As $|X'|=|Y'|$, this implies $|X'|=|Y'|\leq \sqrt{D}a$, which gives $|X|=|Y|\le Ca$ for $C=2^{r^{2}}\sqrt{D}$.
\end{proof}

\begin{proof}[Proof of \Cref{epsilon}]
   By \Cref{nedges}, the maximum degree of $G$ is at most $b^{r}3^{b}< (\log n)^r n^{\eps/2}$. If $n$ is sufficiently large, this implies that $G$ has at most $n^{1+\eps}$  edges. Also, by \Cref{emptybipartite}, the complement of $G$ does not contain a biclique of size $Ca\le cn/(\log n)^{r},$ where $c$ is a constant depending only on $\eps$ and $r$. This finishes the proof.
\end{proof}

Our construction can be adjusted slightly to achieve that the comparability graphs of $<_{1},\dots,<_{r}$ are also disjoint, by changing the partial orders $\prec_{1},\dots,\prec_{r}$.  Indeed, define the partial orders $\prec'_{1},\dots,\prec'_{r}$ on $[b]^{r}$ such that $\alpha\prec'_{s}\beta$, if $|\beta-\alpha|_{\infty}=\beta_{s}-\alpha_{s}$ and $|\beta_{i}-\alpha_{i}|<\beta_{s}-\alpha_{s}$ for $i<s$. Then, clearly, any two points of $[b]^{r}$ are comparable by exactly one of the partial orders $\prec_{1}',\dots,\prec_{r}'$. A similar argument shows that the graph $G$ defined with these partial orders also satisfies the conditions of \Cref{epsilon}, but the proof is a bit more technical.

\section{Concluding remarks}\label{sect:open}

As we mentioned in the introduction, \Cref{basicthm} provides the best known lower bound on the number of pairwise disjoint or pairwise intersecting sets among any $n$ convex sets or half lines in the plane ($n^{1/5}$ and $n^{1/3}$, respectively). A construction of Larman, Matou\v{s}ek, Pach and T\"or\H{o}csik \cite{LMPT94} gives an upper bound of $n^{\log 2/\log 5} \approx n^{0.431}$ for both problems, and in the case of convex sets, this was further improved to $n^{\log 8/\log 169 }\approx n^{0.405}$ by Kyn\v{c}l \cite{K12}.

There are several other geometric questions where comparability graphs are useful. For example, one can consider a family of translates of a fixed convex set $C$ in the plane, or a family of $x$-monotone curves grounded at some vertical line $L$. (An $x$-monotone curve is said to be grounded at $L$, if it intersects $L$ in exactly one point: its left endpoint.) Both of these are well-studied geometric settings (see, e.g., \cite{K04,G00}), and the disjointness graph of both families can be written as the union of two comparability graphs. So in both cases, we can find a disjoint or intersecting subfamily of size $n^{1/3}$.

Any improvement in \Cref{basicthm} for $r=2$ would improve all of these lower bounds. The main message of \Cref{mainthm} is that we cannot hope for much improvement without additional geometric observations. We should mention that this was done for the translates of a convex set, where the optimum was shown to be $\Theta(\sqrt{n})$ using new ideas \cite{P03,K04}. Still, it would be very interesting to see if the $(\frac{\log n}{\log\log n})^{2/3}$ error term could be removed in \Cref{mainthm}. We propose the following problem:

\begin{problem}
Is there a graph $G$ on $n$ vertices that is the union of two perfect (or comparability) graphs, and the largest homogeneous set of $G$ has size $O(n^{1/3})$?
\end{problem}

Unfortunately, our construction for \Cref{mainthm} does not seem to generalize to the union of $r>2$ comparability graphs. In fact, we believe that an $\eps>0$ might exist such that any union of 3 comparability graphs on $n$ vertices contains a homogeneous set of size $n^{1/4+\eps}$. Such a result would also imply that the disjointness graph of $n$ convex sets contains a homogeneous subset of size $n^{1/5+\eps/2}$.

As we cannot improve the lower bounds, it would make sense to try to turn our upper bound into a construction of sets in the plane. Unfortunately, certain geometric restrictions make it unlikely that this can be done. For example, Fox, Pach and T\'oth $\cite{FPT10}$ proved that the disjointness graph of convex sets contains a complete or empty bipartite graph with parts of size $\Omega(n)$, so our construction in \Cref{thm:bipartite} certainly cannot be adapted to that setting. 

Something similar can be done, however, for general $x$-monotone curves. Inspired by the present paper, Pach and Tomon \cite{PT20} very recently constructed a set of curves whose disjointness graph has large chromatic number, and therefore cannot be covered with fewer than 4 comparability graphs. It would be interesting to decide if the 4 comparability graphs are needed for segments, as well.

\medskip

It also makes sense to consider the off-diagonal version of our Ramsey problem, i.e., where we are looking for the largest clique in the union of two (or more) comparability graphs, assuming there is no independent set of size $t$. By \Cref{basicthm}, if $G$ is the union of two comparability graphs on $n$ vertices and $G$ does not contain an independent set of size $t$, then $G$ contains a clique of size $\sqrt{n/t}$. For $t=3$, this is actually weaker than the well-known result of Ajtai, Koml\'os and Szemer\'edi \cite{AKS80} that says that every graph whose complement is triangle-free contains a clique of size $\Omega(\sqrt{n\log n})$. However, we believe that for unions of comparability graphs, an even stronger bound should hold.

\begin{conjecture}
Let $G$ be a graph on $n$ vertices that is the union of two perfect (or comparability) graphs such that the complement of $G$ does not contain a triangle. Then $\omega(G)=n^{1-o(1)}$.
\end{conjecture}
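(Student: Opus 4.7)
My plan is to combine the Fox--Pach biclique bound $p_{2}(n)\ge n\cdot 2^{-(1+o(1))(\log\log n)^{2}}=n^{1-o(1)}$ with the constraint that $\overline{G}$ is triangle-free, via the following simple observation. If $\overline{G}$ is triangle-free and contains a biclique with parts $X,Y$, then for any $x_{1},x_{2}\in X$ and $y\in Y$ we have $x_{1}y,x_{2}y\in E(\overline{G})$, which forces $x_{1}x_{2}\in E(G)$ (else $\{x_{1},x_{2},y\}$ is a triangle of $\overline{G}$). Hence $X$ and $Y$ are each cliques in $G$, and any biclique in $\overline{G}$ of part-size $s$ is automatically upgraded to a clique in $G$ of size $s$.

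Applying the biclique theorem to $G$, I obtain $K_{s,s}$ with $s=n^{1-o(1)}$ in $G$ or in $\overline{G}$. In the $\overline{G}$ case the observation above yields a clique of size $n^{1-o(1)}$ in $G$ and we are done. Otherwise $G$ contains a biclique $(X,Y)$; both $G[X]$ and $G[Y]$ inherit the hypothesis (union of two comparability graphs with triangle-free complement), so I recurse. Moreover, since all $X$--$Y$ edges lie in $G$, a clique in $G[X]$ merged with a clique in $G[Y]$ is still a clique in $G$, yielding the recursion $\omega(G)\ge \omega(G[X])+\omega(G[Y])\ge 2\,\omega_{2}(s)$, where $\omega_{2}(m)$ denotes the minimum clique size over the class. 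Equivalently, iterating along a binary tree of bicliques produces $2^{k}$ pairwise $G$-complete subsets of size at least $n\cdot 2^{-k(1+o(1))(\log\log n)^{2}}$ after $k$ levels, and picking one vertex from each gives a clique of size $2^{k}$.

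Optimising this tree bound up to the point where the leaves shrink to a single vertex gives $\omega(G)\ge n^{1/(\log\log n)^{2}}$. A more careful asymptotic analysis of $\omega_{2}(n)\ge 2\,\omega_{2}(s)$ with $s=n\cdot 2^{-(\log\log n)^{2}}$ shows that the deficit $1-\log\omega_{2}(n)/\log n$ grows by roughly $(\log\log n)^{2}/\log n$ per recursion step, and these losses sum to a constant over the full iteration. So the binary biclique recursion, taken alone, cannot cross the $n^{1-o(1)}$ threshold.

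Closing this gap is the main obstacle, and I believe it requires a genuinely new ingredient beyond iterated Fox--Pach. Two directions seem promising: (i) a complete multipartite strengthening of Fox--Pach, showing that $G$ or $\overline{G}$ contains a complete $t$-partite subgraph with many parts of size $n^{1-o(1)}$ — together with the key observation, either branch immediately produces the conjectured clique; or (ii) reducing to the near-regular regime $\Delta(\overline{G})\le n^{1-\varepsilon}$ (since otherwise a vertex's non-neighbourhood, which is a clique by the triangle-free condition, already has size $n^{1-o(1)}$), and then showing that a triangle-free graph sitting inside the intersection of two cocomparability graphs cannot sustain such a sparse, almost-regular structure. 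The second route would have to exploit features of the underlying partial orders that the coloring and biclique arguments do not see, and this is where I expect the real difficulty to lie.
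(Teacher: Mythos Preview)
The statement you are attempting to prove is a \emph{conjecture} in the paper, not a theorem; the authors explicitly leave it open and in fact follow it with a construction showing that the stronger bound $\omega(G)=\Omega(n)$ fails. So there is no proof in the paper to compare your attempt against.

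Your exploration is honest about its own limitations, and the partial reasoning is sound: the observation that a triangle-free $\overline{G}$ turns any biclique of $\overline{G}$ into a clique of $G$ is correct, and the recursive splitting via Fox--Pach is set up properly. But, as you yourself conclude, iterating $\omega_{2}(n)\ge 2\,\omega_{2}\bigl(n\cdot 2^{-(1+o(1))(\log\log n)^{2}}\bigr)$ only yields $\omega(G)\ge n^{\Theta(1/(\log\log n)^{2})}$, which is far short of $n^{1-o(1)}$. This is not a proof of the conjecture, and you correctly flag the gap as the main obstacle. The two directions you sketch at the end (a multipartite strengthening of Fox--Pach, or a structural argument in the near-regular regime) are reasonable speculations, but neither is carried out, so the proposal remains an outline of a failed approach together with suggestions for future work --- which is exactly the status the problem has in the paper.
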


Let us point out, though, that an $\Omega(n)$ lower bound cannot hold. Indeed, let $P$ be a point set in the unit square, no two on a vertical line. We define two partial orders $<_1$ and $<_2$ as follows. For points $p=(x,y)$ and $p'=(x',y')$ in $P$ with $x<x'$, let $p<_1 p'$ if $y>y'$, and let $p<_2 p'$ if $y\le y'$ and the axis-parallel rectangle with diameter $pp'$ contains another point of $P$. Then the union of the comparability graphs does not contain an independent set of size 3. On the other hand, by mimicking an argument of Chen, Pach, Szegedy and Tardos \cite{CPST09}, it is not hard to show that for a uniformly random point set $P$, the largest clique has size $O(n\frac{(\log\log n)^2}{\log n})$ with high probability.

This question is motivated by, and it is closely related to the conflict-free coloring problem for rectangles. Given a point set $P$ in the plane, the rectangle Delaunay graph $D_r(P)$ is defined by connecting two points $p,q\in P$ if the closed axis-parallel rectangle with diagonal $pq$ does not contain any other points of $P$. The conflict-free coloring problem asks for the chromatic number $\chi(D_r(P))$, which, as observed by Har-Peled and Smorodinsky \cite{HS05}, is essentially determined by the independence number of $D_r(P)$. The construction of Chen, Pach, Szegedy and Tardos in \cite{CPST09} provides an example where the independence number has size $o(n)$.
On the other hand, observing that $D_r(P)$ contains no clique of size 5, and that its complement is the union of two comparability graphs, we see that any lower bound in our problem for $t=5$ would automatically imply a lower bound on $\alpha(D_r(P))$.

\subsubsection*{Acknowledgments}

We would like to thank Nabil Mustafa, J\'anos Pach and G\'eza T\'oth for fruitful discussions, and Matthew Kwan for drawing our attention to the result of \cite{RS98}.

\end{document}